\documentclass[11pt,draft]{amsart}
\usepackage{graphicx}
\usepackage{amssymb}
\usepackage{amsmath}
\usepackage{amsthm}
\usepackage{amscd}
\usepackage{epsfig}
\usepackage[pagewise]{lineno}%\linenumbers
\makeatletter
\@namedef{subjclassname@2020}{%
\textup{2020} Mathematics Subject Classification} \makeatother
\usepackage[colorlinks=true]{hyperref}
\usepackage{amsfonts}
\usepackage[top=30mm, bottom=35mm, left=25mm, right=25mm]{geometry}

\oddsidemargin=6pt \evensidemargin=6pt \topmargin -10pt \textwidth
16true cm \textheight 24true cm

\parskip=0.15cm

\newtheorem{theorem}{Theorem}[section]

\newtheorem{corollary}[theorem]{Corollary}
\newtheorem{lemma}[theorem]{Lemma}

\newtheorem{remark}[theorem]{Remark}
\theoremstyle{definition}
\newtheorem{defin}[theorem]{Definition}
\numberwithin{equation}{section}

\newcommand{\lk}{\left}
\newcommand{\re}{\right}
\begin{document}

\title{Directional dynamics of $\mathbb{Z}_+\times \mathbb{Z}$-actions
generated by 1D-CA and the shift map}

\author [H. Ak\i n  C. Liu]{Hasan Ak\i n and Chunlin Liu}

\address{H. Ak\i n: International Centre for Theoretical Physics
(ICTP), Strada Costiera, 11 I - 34151 Trieste Italy}
\email{hakin@ictp.it, akinhasan25@gmail.com}

\address{C. Liu: CAS Wu Wen-Tsun Key Laboratory of Mathematics,
School of Mathematical Sciences,
University of Science and Technology of China, Hefei, Anhui, 230026, PR China}
\email{lcl666@mail.ustc.edu.cn}

\subjclass[2020]{Primary: 37A25, 37A35, 28D05, 28D20, 37A44}

\keywords {Directional sequence entropy, cellular automata, directional weak
mixing}
\date{\today }%
\begin{abstract}
In this short paper, we compute the directional sequence entropy
for of $\mathbb{Z}_+\times \mathbb{Z}$-actions generated by
cellular automata and the shift map. Meanwhile, we study the
directional dynamics of this system. As a corollary, we prove that
there exists a sequence  such that for any direction, some of the
systems above have positive  directional sequence entropy.
Moreover,  with help of mean ergodic theory for directional weak
mixing systems, we obtain a  result of number theory about
combinatorial numbers.
\end{abstract}

\maketitle

\markboth{Directional dynamics of $\mathbb{Z}_+\times
\mathbb{Z}$-actions}{H. Ak\i n and C. Liu}

%%%%%%%%%%%%%%%%%%%%%%%%%%%%%%%%%%%%%%%%%%%%%%%%%%%%%%%%%%%%%%%%%%%%%%%%%%%%%%%%%%%%%%%%%%%%%%%%%%%%%%%%%%%%%%%%%%%%%%%%%%%%%%%%%%%%
%%%%%%%%%%%%%%%%%%%%%%%%%%%%%%%%%%%%%%%%%%%%%%%%%%%%%%%        Introduction      %%%%%%%%%%%%%%%%%%%%%%%%%%%%%%%%%%%%%%%%%%%%%%%%%%%
%%%%%%%%%%%%%%%%%%%%%%%%%%%%%%%%%%%%%%%%%%%%%%%%%%%%%%%%%%%%%%%%%%%%%%%%%%%%%%%%%%%%%%%%%%%%%%%%%%%%%%%%%%%%%%%%%%%%%%%%%%%%%%%%%%%%
\section{Introduction}
In the late 1940s, cellular automata (CA) were created by John von
Neumann \cite{Neumann}, who was inspired by biological
applications. A cellular automaton is a sort of dynamical system
invented by Ulam \cite{Ulam} and von Neumann \cite{Neumann} as a
model for self-production. 1-dimensional CA (1D-CA) is made up of
an infinite lattice with finite states and a mapping called the
local rule. Hedlund \cite{Hedlund} takes a methodical approach to
CA from a mathematical standpoint. Since 1940, CA have continued
to be the focus of extreme attention by many researchers
\cite{Akin-2005b}.

The entropy of a system has been studied extensively for various
purposes in fields such as computer science, mathematics, physics,
chemistry, information theory. It is well known that the entropy
measures the chaoticity or unpredictability of a system. In
ergodic theory, there exist numerous notions of entropy of the
measure-preserving transformation on the probability space (e.g.,
measure entropy, topological entropy, directional entropy,
rotational entropy, etc.) \cite{Ward,Akin-2003,Akin-2005a}. Ward
\cite{Ward} investigated the topological entropy of 1-dimensional
linear CA (1D-LCA). If the local rule $f$ is defined by
$f(x_{-k},\cdots,x_k)=\sum_{i=-k}^{k}x_i\ (\text{mod}\ m)$ for
$k\in \mathbb{N}$, the uniform Bernoulli measure, as proven by
Ak\i n \cite{Akin-2003}, is a measure with maximum entropy. In
Ref. \cite{Akin-2005a}, the author calculated the
measure-theoretic entropy and directional entropy of
$\mathbb{Z}\times \mathbb{N}$-actions generated by some linear
1D-LCA and the shift map. In Ref. \cite{Akin-2009}, the author
derived a formula to compute the topological directional entropy
of $\mathbb{Z}^2$-action using the coefficients of the local rule.
Ak\i n et al. \cite{ABC-2013} studied the quantitative behavior of
1-D linear cellular automata (LCAs) and proved that the Hausdorff
of the limit set of a LCA is the unique root of Bowen's equation.
Chang, and Ak\i n \cite{CA-2016} proved that every invertible
1D-LCA is a Bernoulli automorphism without making use of the
natural extension.

Since Milnor \cite{Milnor-1988} introduced a notion of directional
dynamics, the study of directional dynamics has led to other
productive lines of research, the most notable among these being
Boyle and Lind's work on expansive sub-dynamics \cite{BL}. In
\cite{Ai} Johnson and \c Sahin studied  directional recurrence
properties for $\mathbb{Z}^d$-actions. Using classical sequence
entropy \cite{K,KN,Saleski} and directional entropy
\cite{Akin-2005a,Milnor-1988}, Liu and Xu \cite{LX} introduced
directional sequence entropy for $\mathbb{Z}^q$-actions. Moreover,
they \cite{LX,LX2} have defined directional properties
intrinsically and studied their relationship to spectrum.
Dennunzio et al. \cite{Dennunzio-2009} provided a circumstantial
classification for a class of the LCA by taking into account
non-trivial examples to study some of Sablik's categorization
classes and extended studies of directional dynamics to include
factor languages and attractors.

In the present paper, we consider $\mathbb{Z}_+\times
\mathbb{Z}$-actions generated by 1D-CA and the shift map on space
$\mathbb{Z}^{\mathbb{Z}}_{a}$ consisting two-sided infinite
sequences, where $\mathbb{Z}_{a}$ is a finite ring. In Section 2,
we recall some notions and results that we will use. In Section 3,
we compute the directional sequence entropy for
$\mathbb{Z}_+\times \mathbb{Z}$-actions. Meanwhile, we study the
directional dynamics behavior of this system. We prove that
$\mathbb{Z}_+\times \mathbb{Z}$-systems are directional weak
mixing along any direction $\vec{v}\in S^1$, under given
conditions in Section 4. Moreover, we take advantage of
directional version of mean ergodic theory introduced by the
second author \cite{L} to obtain a number theory result about
combinatorial numbers. That is, for $m$-a.e. $x\in(0,1)$, the
probability that the number $0$ or $1$ appears in the sequence
$\{\sum_{l=0}^{n}\binom{n}{l}x_{l+1} (\bmod 2)\}_{n=1}^\infty$ is
$1/2$, where $\binom{n}{l}$ is the combinatorial number,
$0.x_1x_2\ldots$ is the $2$-adic development of $x$ and $m$ is the
Lebesgue measure on $[0,1]$. In fact, we prove a stronger result
in Section 5.

\section{Preliminaries}
\subsection{Cellular automaton}
Let $\mathbb{Z}_{a}=\{0,1,\cdots,a-1\}$ be a finite ring. The
compact topological space $\mathbb{Z}^{\mathbb{Z}}_{a}$ consists
of two-sided infinite sequences denoted as
$x=(x_n)_{n=-\infty}^{\infty},$ where $x_n\in \mathbb{Z}_{a}.$ Let
$T_{f[-r,r]}:\mathbb{Z}_{a}^{\mathbb{Z}}\rightarrow
\mathbb{Z}_{a}^{ \mathbb{Z}}$  be a map that acts locally on the
set of two-sided infinite sequences, where
$f:\mathbb{Z}_{a}^{2r+1}\rightarrow \mathbb{Z}_{a}$ is called
local rule.
\begin{defin}
Let us define the local rule $f$ by
\begin{equation}\label{LR1}
f(x_{-r},\cdots,x_{r})=\overset{r}{\underset{i=-r}{\sum }}\lambda
_{i}x_{i}\ (\text{mod}\ a), \end{equation}
 where at least one of $\lambda_{-r},\cdots ,\lambda _{r}$ is
 nonzero. The cellular automaton $T_{f[-r,r]}$ generated by $f$ given in
 \eqref{LR1} is defined as;
\begin{equation}
(T_{f[-r,r]}(x)):=(y_n)_{-\infty}^{\infty},y_n=f(x_{n-r},
\cdots,x_{n+r})=\overset{r}{\underset{i=-r}{\sum
}}\lambda _{i}x_{n+i}\ (\text{mod}\ a),
\end{equation}
where the positive integer $r$ is called the radius of the local
rule $f$.
\end{defin}
We consider the semigroup $\mathbb{Z}_+\times \mathbb{Z}$-action
$\Phi$ defined by
\begin{equation}\label{action}
\Phi^{(m,n)}:=T^{m}_{f[-r,r]}\circ \sigma^{n},\ m\in \mathbb{Z}_+\
\text{and}\ n\in \mathbb{Z},
\end{equation}
where $\sigma$ is the shift map from $\mathbb{Z}_a^{\mathbb{Z}}$
to $\mathbb{Z}_a^{\mathbb{Z}}$ given by $(\sigma(x))_i=x_{i+1}$
for $i\in \mathbb{Z}$, $x\in \mathbb{Z}_a^{\mathbb{Z}}$. Note that
the shift map $\sigma$ is one of the simplest examples of CA.

\subsection{Directional sequence entropy and directional weak mixing}
To study directional system, Liu and Xu \cite{LX} introduced a new
invariant directional sequence entropy. We recall it as follows.

Let $(\mathbb{Z}_a^\mathbb{Z},\mathcal{X},\mu, \Phi)$ be a
$\mathbb{Z}^2$-measure preserving system ($\mathbb{Z}^2$-m.p.s.
for short) and $\vec{v}=(1,\beta)\in \mathbb{R}^2$ be a direction
vector, where $\Phi$ is a $\mathbb{Z}_+\times \mathbb{Z}$-action
given in \eqref{action}, $\mathcal{X}$ is a $\sigma$-algebra
generated by the cylinder sets and $\mu$ is a product measure on
$\mathbb{Z}_a^\mathbb{Z}$ (see \cite{Denker} for details). For the
sake of simplicity, we write the vector $\vec{v}$ by $(1,\beta)$.
Someone can show that all results in this paper are true for
$\vec{v}=(0,1)$ since this is reduced to the special case of
$\mathbb{Z}$-actions. We put
$$
\Lambda^{\vec{v}}(b)=\left\{(m,n)\in\mathbb{Z}^2:\beta m-b/2\leq
n\leq \beta m+b/2\right\}
$$
and write $\Lambda_k^{\vec{v}}(b)=\Lambda^{\vec{v}}(b)\cap
([0,k-1]\times \mathbb{Z})$.
\subsubsection{Directional sequence entropy.}
For a finite measurable partition
$\alpha$ of $\mathbb{Z}_a^\mathbb{Z}$, let
$$H_{\mu}(\alpha)=-\sum_{A\in \alpha}\mu(A)\log{\mu(A)}.$$
Let us consider any infinite subset
$S=\{(m_i,n_i)\}_{i=1}^{\infty}$ of $\Lambda^{\vec{v}}(b)$ such
that $\{m_i\}_{i=1}^{\infty}$ is strictly monotone, from the
definition of sequence entropy \cite{K,KN,Saleski} we obtain
$$
h^S_{\mu}(\Phi,\alpha)= \limsup_{k\to \infty} \frac{1}{k}H_\mu
\lk(\bigvee_{i=1}^k \Phi^{-(m_i,n_i)} \alpha\re).
$$
Then one can define the directional sequence entropy of $\Phi$ for
the subset $S$ by
$$
h^S_{\mu}(\Phi)=\sup_{\alpha}h^S_{\mu}(\Phi,\alpha),
$$
where the supremum is taken over all finite measurable partitions
of $\mathbb{Z}_a^\mathbb{Z}$.
\subsubsection{Directional weak mixing}
Let $\mathcal{A}_c^{\vec{v}}(b)$ be the collection
of $f\in \mathcal{H}:=L^2(\mathbb{Z}_a^\mathbb{Z},\mathcal{X},\mu)$ such that
$$
\overline{\left\{U_\Phi^{(m,n)}f:(m,n)\in \Lambda^{\vec{v}}(b)
\right\}}\text{ is compact in
}L^2(\mathbb{Z}_a^\mathbb{Z},\mathcal{X},\mu),$$ where
$U_\Phi^{(m,n)}:L^2(\mathbb{Z}_a^\mathbb{Z},\mathcal{X},\mu)\to
L^2(\mathbb{Z}_a^\mathbb{Z},\mathcal{X},\mu)$ is the unitary
operator such that
\begin{equation*}
U_\Phi^{(m,n)}f=f\circ \Phi^{(m,n)}\text{ for all }f\in L^2(\mathbb{Z}_a^\mathbb{Z},\mathcal{X},\mu).
\end{equation*}
One can easily show that $\mathcal{A}_c^{\vec{v}}(b)$ is a
$U_{\Phi^{\vec{w}}}$-invariant for any vector $\vec{w}\in
\mathbb{Z}^2$ and conjugation-invariant subalgebra of
$\mathcal{H}$. Then there exists a $\Phi$-invariant
sub-$\sigma$-algebra $\mathcal{K}_\mu^{\vec{v}}(b)$  of
$\mathcal{X}$ such that
\begin{align}
\label{1}\mathcal{A}_c^{\vec{v}}(b)=L^2(\mathbb{Z}_a^\mathbb{Z},\mathcal{K}_\mu^{\vec{v}}(b),\mu).
\end{align}
Directly from \eqref{1}, we define the $\vec{v}$-directional
Kronecker algebra of $(\mathbb{Z}_a^\mathbb{Z},\mathcal{X},\mu,
\Phi)$ by
$$
\mathcal{K}_\mu^{\vec{v}}=\left\{B\in\mathcal{X}:
\overline{\left\{U_\Phi^{(m,n)}1_B :(m,n)\in \Lambda^{\vec{v}}(b)
\right\}}\text{ is compact in }
L^2(\mathbb{Z}_a^\mathbb{Z},\mathcal{X},\mu) \right\}.
$$
Note that the definition of $\mathcal{K}_\mu^{\vec{v}}(b)$ is
independent of the selection of $b\in (0,\infty)$ (see
\cite[Proposition 3.1]{LX}). So we omit $b$ in
$\mathcal{K}_\mu^{\vec{v}}(b)$ and write it as
$\mathcal{K}_\mu^{\vec{v}}$. We say $\mu$ is $\vec{v}$-weak mixing
if $\mathcal{K}_\mu^{\vec{v}}=\{X,\emptyset\}$.
\section{Computation of directional sequence entropy}
In this section, we will compute the sequence entropy and
directional sequence entropy of $\mathbb{Z}_+\times
\mathbb{Z}$-actions obtained by 1D-CA and the shift map. We need
the following lemma (see \cite{K} for $\mathbb{Z}$-actions).
\begin{lemma}\label{lem1}
Let  $(\mathbb{Z}_a^\mathbb{Z},\mathcal{X},\mu,\Phi)$ be a
$\mathbb{Z}^2$-m.p.s., where $\Phi$ is a function given in
\eqref{action}, $\left\{\xi_{k}\right\}$ be a sequence of
measurable partitions such that $\xi_{1} \leq \xi_{2} \leq \cdots$
$\leq \xi_{n} \leq \cdots$, and $\bigvee_{i=1}^{\infty}
\xi_{i}=\epsilon$ (where $\epsilon$ denotes the partition into
points of $(\mathbb{Z}_a^\mathbb{Z}, \mu)$ ). Then for any $S$ and
$\Phi, h^{S}(\Phi)=\lim _{k \rightarrow \infty} h^S\left(\Phi,
\xi_{k}\right)$.
\end{lemma}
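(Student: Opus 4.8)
The plan is to follow the classical Kushnirenko argument, adapting its two standard ingredients — monotonicity of entropy under refinement and a Rokhlin-type conditional inequality — to the present non-autonomous sequence setting along $S=\{(m_i,n_i)\}_{i=1}^{\infty}$.

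First I would dispose of the easy inequality. Since $\xi_1\le\xi_2\le\cdots$, refinement monotonicity of the finite joins $\bigvee_{i=1}^{k}\Phi^{-(m_i,n_i)}\xi_\ell$ in $\ell$ shows that $h^S_\mu(\Phi,\xi_\ell)$ is nondecreasing in $\ell$, so $\lim_{\ell\to\infty}h^S_\mu(\Phi,\xi_\ell)$ exists in $[0,\infty]$. As each $\xi_\ell$ is one of the finite partitions entering the supremum that defines $h^S_\mu(\Phi)$, this already gives $\lim_{\ell}h^S_\mu(\Phi,\xi_\ell)\le h^S_\mu(\Phi)$.

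The real content is the reverse inequality, and the key step is a sequence-entropy version of the Rokhlin inequality: for any two finite partitions $\alpha,\beta$,
$$h^S_\mu(\Phi,\alpha)\le h^S_\mu(\Phi,\beta)+H_\mu(\alpha\mid\beta).$$
To prove it I would start from $H_\mu(\gamma)\le H_\mu(\gamma\vee\delta)=H_\mu(\delta)+H_\mu(\gamma\mid\delta)$ with $\gamma=\bigvee_{i=1}^{k}\Phi^{-(m_i,n_i)}\alpha$ and $\delta=\bigvee_{i=1}^{k}\Phi^{-(m_i,n_i)}\beta$, and then control the conditional term by subadditivity together with enlarging the conditioning $\sigma$-algebra,
$$H_\mu(\gamma\mid\delta)\le\sum_{i=1}^{k}H_\mu\!\left(\Phi^{-(m_i,n_i)}\alpha\mid\Phi^{-(m_i,n_i)}\beta\right)=k\,H_\mu(\alpha\mid\beta),$$
the final equality using that every $\Phi^{(m_i,n_i)}=T^{m_i}_{f[-r,r]}\circ\sigma^{n_i}$ is $\mu$-preserving, so conditional entropy is invariant under pull-back. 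Dividing by $k$ and passing to the $\limsup$ yields the displayed inequality. This is exactly the delicate point: because the transformations $\Phi^{-(m_i,n_i)}$ differ with $i$, one cannot cite the single-map estimate directly, and it is the measure-invariance of the action that converts the sum of per-coordinate conditional entropies into the clean factor $k\,H_\mu(\alpha\mid\beta)$ — I expect this to be the main obstacle.

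Finally I would substitute $\beta=\xi_\ell$. Since $\bigvee_{\ell}\xi_\ell=\epsilon$ separates points, the increasing-martingale theorem for conditional entropy gives $H_\mu(\alpha\mid\xi_\ell)\to 0$ for each fixed finite $\alpha$; hence $h^S_\mu(\Phi,\alpha)\le\lim_{\ell}h^S_\mu(\Phi,\xi_\ell)$. Taking the supremum over all finite partitions $\alpha$ gives $h^S_\mu(\Phi)\le\lim_{\ell}h^S_\mu(\Phi,\xi_\ell)$, which combined with the easy inequality establishes the claimed equality $h^S_\mu(\Phi)=\lim_{k\to\infty}h^S_\mu(\Phi,\xi_k)$.
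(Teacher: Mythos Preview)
Your argument is correct and is exactly the classical Kushnirenko proof, carried over verbatim to sequences indexed by $S\subset\mathbb{Z}_+\times\mathbb{Z}$. The paper, however, gives no proof at all: Lemma~\ref{lem1} is simply stated with a parenthetical citation ``(see \cite{K} for $\mathbb{Z}$-actions)'' and then used. So there is nothing to compare against; you have supplied the omitted verification. One minor wording quibble: when you say ``enlarging the conditioning $\sigma$-algebra'' you really mean that passing from the coarser conditioning $\Phi^{-(m_i,n_i)}\beta$ to the finer $\bigvee_j\Phi^{-(m_j,n_j)}\beta$ can only decrease $H_\mu(\Phi^{-(m_i,n_i)}\alpha\mid\,\cdot\,)$, which is the direction you need; the inequality itself is stated correctly.
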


\begin{theorem}\label{Thm1}
Let $\mu$ be the uniform Bernoulli measure on $\mathbb{Z}_a^\mathbb{Z}$,
$$
f\left(x_{n-r},\cdots,x_{n+r}\right)=\sum\limits_{i=-r}^{r}\lambda_i x_{n+i}(\bmod\ a)
$$
with $gcd(\lambda_{-r},a)=1$ and $gcd(\lambda_{r},a)=1$, and
$\Phi$ be a $\mathbb{Z}_+\times \mathbb{Z}$-action defined in
\eqref{action}. Then for the $\mathbb{Z}_+\times
\mathbb{Z}$-m.p.s.
$(\mathbb{Z}_a^\mathbb{Z},\mathcal{X},\mu,\Phi)$ and
$S=\{(m_i,n_i)\}_{i=1}^\infty\subset\mathbb{Z}_+\times\mathbb{Z}$
such that $\{m_i\}_{i=1}^\infty$ is a strictly monotone increasing
syndetic set with gap $N\in\mathbb{N}$ and $m_i>n_i$, one has
\[h^{S}(\Phi)=2r\log a\cdot\limsup_{l\to\infty}\frac{m_l}{l}.\]
\begin{proof}
If we choose $M\in\mathbb{N}$ large enough then $\xi(-M, M)
\vee\Phi^{-(m_1,n_1)} \xi(-M, M) $ consists of all cylinder sets
in the form
\[_{-\left(r m_1+M\right)-n_1}\left[j_{-r m_1-M},
\ldots, j_{r m_1+M}\right]_{\left(r m_1+M\right)-n_1}.
\]
Since $2(r m_1+M)+1>2r m_2+1$ it follows that
$$\xi(-M, M)
\vee\Phi^{-(m_1,n_1)} \xi(-M, M)\vee\Phi^{-(m_2,n_2)} \xi(-M, M)
$$
consists of all cylinder sets in the form
\[_{-\left(r m_2+M\right)-n_2}\left[j_{-r m_2-M}, \ldots,
j_{r m_2+M}\right]_{\left(r m_2+M\right)-n_2}.\]

By the same way we can prove that  $
\bigvee_{i=0}^{l}\Phi^{-(m_i,n_i)} \xi(-M, M) $ consists of all
cylinder sets in the form
\[
_{-\left(r m_l+M\right)-n_l}\left[j_{-r m_l-M}, \ldots,
j_{r m_l+M}\right]_{\left(r m_l+M\right)-n_l}.
\]
Thus, we get
\begin{equation*}
\begin{split}
&h^{S}(\Phi,\xi(-M,M))=\limsup_{l\to\infty}\frac{1}{l+1}H_{\mu}(\bigvee_{i=0}^{l}\Phi^{-(m_i,n_i)}\xi(-M,M))\\
=&\limsup_{l\to\infty}\frac{1}{l+1}a^{2(r m_l+M)-1}
\mu\lk(_{-\left(r m_l+M\right)-n_l}\left[j_{-r m_l-M}, \ldots, j_{r m_l+M}\right]_{\left(r m_l+M\right)-n_l}\re)\\
\quad&\times \log\mu\lk(_{-\left(r m_l+M\right)-n_l}\left[j_{-r m_l-M}, \ldots, j_{r m_l+M}\right]_{\left(r m_l+M\right)-n_l}\re) \\
=&\limsup_{l\to\infty}-\frac{1}{l+1}\log a^{-2(r m_l+M)-1}
=\lk(\limsup_{l\to\infty}\frac{2(r m_l+M)+1}{l+1}\re)\cdot\log a\\
=&2r \log a\cdot\limsup_{l\to\infty}\frac{m_l}{l}.
\end{split}
\end{equation*}
By Lemma \ref{lem1} and the fact that $\bigvee_{i=M}^{\infty}
\xi(-i,i)=\epsilon$, one has
\[
h^{S}(\Phi)=2r\log a\cdot\limsup_{l\to\infty}\frac{m_l}{l}.
\]
This finishes the proof.
\end{proof}
\end{theorem}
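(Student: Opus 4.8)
The plan is to compute the directional sequence entropy $h^S(\Phi)$ by approximating the point partition $\epsilon$ by the increasing sequence of cylinder partitions $\xi(-M,M)$ and invoking Lemma \ref{lem1}. The core idea is that refining $\xi(-M,M)$ under the maps $\Phi^{-(m_i,n_i)}$ produces, at each stage, a cylinder partition whose \emph{coordinate window grows linearly} in the largest index $m_l$, and whose atoms all have equal measure under the uniform Bernoulli measure; the entropy is then just the logarithm of the number of atoms, normalized by $l$.

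First I would pin down the shape of a single refinement. Since $\Phi^{(m,n)}=T^m_{f[-r,r]}\circ\sigma^n$ and $f$ has radius $r$, the map $T^m_{f[-r,r]}$ has radius $rm$, so $\Phi^{-(m_i,n_i)}\xi(-M,M)$ is a cylinder determined by coordinates in a window of half-width $rm_i+M$ shifted by $-n_i$. The key algebraic input is the hypothesis $\gcd(\lambda_{-r},a)=\gcd(\lambda_r,a)=1$: this guarantees that the two extreme coefficients of $f^{(m)}$ (the $m$-fold composite local rule) are units mod $a$, so that knowing the output block together with all-but-the-extreme input coordinates determines the extreme coordinates bijectively. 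This is what makes the refined partition a genuine cylinder on the full window $[-(rm_i+M)-n_i,\,(rm_i+M)-n_i]$ with no collapsing of atoms — i.e. the refinement behaves like the full coordinate partition on that window.

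Next I would argue, by induction on $l$ and using that $\{m_i\}$ is strictly increasing, that the join $\bigvee_{i=0}^l\Phi^{-(m_i,n_i)}\xi(-M,M)$ is exactly the cylinder partition on the window $[-(rm_l+M)-n_l,\,(rm_l+M)-n_l]$; the nesting $2(rm_i+M)+1 > 2rm_{i+1}+1$ forces each successive window to contain the previous ones so that the join is controlled by the largest index $m_l$. Here the syndeticity of $\{m_i\}$ with gap $N$ and the condition $m_i>n_i$ ensure the windows are genuinely nested and that the shift by $-n_i$ does not push the window outside the range absorbed at the next stage. Under the uniform Bernoulli measure each atom of this cylinder partition has measure $a^{-(2(rm_l+M)+1)}$, so $H_\mu$ equals $(2(rm_l+M)+1)\log a$.

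Finally I would divide by $l+1$ and take $\limsup_{l\to\infty}$; the additive constants $2M+1$ wash out in the limit, leaving $h^S(\Phi,\xi(-M,M))=2r\log a\cdot\limsup_{l\to\infty} m_l/l$. Since this is independent of $M$, Lemma \ref{lem1} together with $\bigvee_{i}\xi(-i,i)=\epsilon$ yields the stated value for $h^S(\Phi)$. I expect the main obstacle to be the inductive verification that the join is \emph{precisely} the full cylinder partition on the large window — that is, proving no loss of atoms occurs. This is exactly where the coprimality assumptions enter, and care is needed to check that the invertibility of the extreme coefficients of the iterated rule $f^{(m)}$ propagates through the composition so that each new generator $\Phi^{-(m_{i+1},n_{i+1})}\xi(-M,M)$ strictly refines rather than merely coarsens the accumulated partition.
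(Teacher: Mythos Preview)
Your proposal is correct and follows essentially the same approach as the paper's proof: compute $h^S(\Phi,\xi(-M,M))$ by showing the join $\bigvee_{i=0}^l\Phi^{-(m_i,n_i)}\xi(-M,M)$ is the full cylinder partition on the window of half-width $rm_l+M$ centered at $-n_l$, evaluate $H_\mu$ as $(2(rm_l+M)+1)\log a$, take the $\limsup$, and then invoke Lemma~\ref{lem1}. Your discussion is in fact more explicit than the paper's about where the hypotheses $\gcd(\lambda_{\pm r},a)=1$ and the syndetic gap $N$ enter (the paper uses them implicitly in ``choose $M$ large enough'' and in the nesting inequality $2(rm_i+M)+1>2rm_{i+1}+1$), so the only remaining work is exactly the inductive verification you flagged.
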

\begin{corollary}
For the $\mathbb{Z}_+\times \mathbb{Z}$-m.p.s.
$(\mathbb{Z}_a^\mathbb{Z},\mathcal{X},\mu,\Phi)$ defined in
Theorem \ref{Thm1}, if $\vec{v}=(x,y)\in S^1$ with $x>y$, then the
directional sequence entropy is only depends on the gap of the
first coordinate, not direction.
\end{corollary}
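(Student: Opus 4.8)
The plan is to read the corollary directly off the formula in Theorem \ref{Thm1}, the only substantive point being to check that the directional hypothesis $x>y$ is exactly the translation of the theorem's condition $m_i>n_i$. Writing $\vec{v}=(x,y)\in S^1$ in the normalized form $(1,\beta)$ with $\beta=y/x$, the assumption $x>y$ (with $x>0$) gives $\beta<1$. Any subset $S=\{(m_i,n_i)\}_{i=1}^\infty\subset\Lambda^{\vec{v}}(b)$ satisfies $\beta m_i-b/2\le n_i\le \beta m_i+b/2$ by the definition of $\Lambda^{\vec{v}}(b)$, so in particular $n_i\le \beta m_i+b/2$. Since $\{m_i\}$ is strictly increasing, for every index with $m_i>b/(2(1-\beta))$ we get $n_i<m_i$; only finitely many indices can fail this, and these finitely many terms affect neither the partition-refinement computation of Theorem \ref{Thm1} nor the value of the $\limsup$. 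Hence the hypotheses of Theorem \ref{Thm1} are met along the direction $\vec{v}$.

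Next I would invoke Theorem \ref{Thm1}: whenever $\{m_i\}$ is strictly increasing syndetic with gap $N$ and (eventually) $m_i>n_i$, we have $h^{S}(\Phi)=2r\log a\cdot\limsup_{l\to\infty}\frac{m_l}{l}$. The decisive observation is that the right-hand side involves only the first coordinates $\{m_i\}$; the second coordinates $\{n_i\}$—which are precisely the data that encode the direction, through the window constraint $n_i\approx\beta m_i$—do not appear at all. Consequently, for a fixed strictly increasing syndetic sequence of first coordinates, the directional sequence entropy equals $2r\log a\cdot\limsup_{l\to\infty}\frac{m_l}{l}$ regardless of which direction $\vec{v}=(x,y)$ with $x>y$ we place the tube in: changing $\vec{v}$ only reshuffles the admissible $n_i$ inside the $b$-window, leaving $h^{S}(\Phi)$ untouched. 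The syndetic gap $N$ bounds this quantity, since $m_l\le m_1+(l-1)N$ forces $\limsup_{l\to\infty}\frac{m_l}{l}\le N$, so the entropy is governed entirely by the gap structure of the first coordinate.

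The one point requiring a little care is the quantifier order implicit in the phrase ``depends only on the gap, not the direction.'' I would make this precise by fixing $\{m_i\}$ first and then letting $\vec{v}$ vary: for each admissible direction one checks that $S$ can still be placed inside $\Lambda^{\vec{v}}(b)$ after adjusting the $n_i$ within the $b$-tube, which is always possible because $\beta m_i$ sits at the center of the allowed window. I do not anticipate a genuine obstacle here—the content is wholly contained in Theorem \ref{Thm1}—so the ``hard part'' amounts only to recognizing that $x>y$ is the directional form of $m_i>n_i$, and to observing that the finitely many small indices excluded above are harmless to the asymptotic value of $h^{S}(\Phi)$.
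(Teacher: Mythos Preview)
Your proposal is correct and follows exactly the approach the paper intends: the paper gives no proof of this corollary at all, treating it as an immediate consequence of the formula $h^{S}(\Phi)=2r\log a\cdot\limsup_{l\to\infty}\frac{m_l}{l}$ from Theorem~\ref{Thm1}, which is precisely what you do. Your explicit verification that the directional condition $x>y$ (hence $\beta<1$) forces $m_i>n_i$ for all but finitely many $i$, so that Theorem~\ref{Thm1} applies along the $\vec{v}$-tube, is in fact more careful than anything the paper spells out.
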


\section{Directional weak mixing}
In this section, we are going to prove several and important new
results.  To study the complexity of directional systems, Liu
\cite{L} introduced directional weak mixing. Since this paper
considers $\mathbb{Z}_+\times \mathbb{Z}$-actions obtained by
cellular automata and the shift map, we only recall results in
\cite{L} for it. In fact, the following results holds for all
$\mathbb{Z}^2$-actions.

Now we will prove a class of systems with $\mathbb{Z}_+\times
\mathbb{Z}$-actions generated by 1D-CA and the shift map is directional weak
mixing along any direction $\vec{v}\in S^1.$ To prove this result
let us begin the following lemma \cite{L}.
\begin{lemma}\label{lemma1}
Let  $(\mathbb{Z}_a^\mathbb{Z},\mathcal{X},\mu,\Phi)$ be a $\mathbb{Z}^2$-m.p.s. and
$\vec{v}=(1,\beta)\in\mathbb{R}^2$ be a direction vector. Then the
following statements are equivalent.
\begin{itemize}
\item [(a)] $(\mathbb{Z}_a^\mathbb{Z},\mathcal{X},\mu,\Phi)$ is $\vec{v}$-weak mixing.
\item [(b)] There exists $b>0$ such that $$\lim_{k\rightarrow
\infty}\frac{1}{\#(\Lambda_k^{\vec{v}}(b))}\sum_{(m,n)\in\Lambda_k^{\vec{v}}(b)}|\langle
U_\Phi^{(m,n)}1_B,1_C\rangle-\mu(B)\mu(C)|=0,$$ for any $B,C\in
\mathcal{X}$.
\item [(c)]  For any $b>0$,
$$
\lim_{k\rightarrow
\infty}\frac{1}{\#(\Lambda_k^{\vec{v}}(b))}\sum_{(m,n)\in\Lambda_k^{\vec{v}}(b)}|\langle
U_\Phi^{(m,n)}1_B,1_C\rangle-\mu(B)\mu(C)|=0,
$$
for any $B,C\in
\mathcal{X}$.
\end{itemize}
\end{lemma}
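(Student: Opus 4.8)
The three statements are the directional analogue of the classical Koopman--von Neumann spectral characterization of weak mixing, and the plan is to deduce them from a directional mean-ergodic (Jacobs--de Leeuw--Glicksberg type) splitting of $\mathcal{H}=L^2(\mathbb{Z}_a^\mathbb{Z},\mathcal{X},\mu)$ into the compact vectors $\mathcal{A}_c^{\vec{v}}(b)$ and their complement. The starting observation is that, since every $U_\Phi^{(m,n)}$ fixes the constants and preserves $\mu$, for $B,C\in\mathcal{X}$ one has $\langle U_\Phi^{(m,n)}1_B,1_C\rangle-\mu(B)\mu(C)=\langle U_\Phi^{(m,n)}f,g\rangle$ with $f=1_B-\mu(B)$ and $g=1_C-\mu(C)$ both of mean zero; so (b) and (c) are statements about the Ces\`aro decay, over the strip $\Lambda_k^{\vec{v}}(b)$, of correlations of mean-zero vectors, while (a) says $\mathcal{A}_c^{\vec{v}}(b)$ contains only constants. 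I would close the loop in the order (c)$\Rightarrow$(b)$\Rightarrow$(a)$\Rightarrow$(c): the first implication is trivial, by specializing the universally quantified $b$.

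For (b)$\Rightarrow$(a) I would argue by contraposition. If $\mu$ is not $\vec{v}$-weak mixing then $\mathcal{K}_\mu^{\vec{v}}\neq\{\emptyset,X\}$, so there is $B$ with $0<\mu(B)<1$ and $f=1_B-\mu(B)$ is a nonzero compact vector, i.e. $\overline{\{U_\Phi^{(m,n)}f:(m,n)\in\Lambda^{\vec{v}}(b)\}}$ is norm-compact; since $\mathcal{K}_\mu^{\vec{v}}(b)$ is independent of $b$ this holds for every $b$. For each $b$, a covering/pigeonhole argument on this compact orbit produces a subset of $\Lambda^{\vec{v}}(b)$ of positive lower density (in the exhaustion $\Lambda_k^{\vec{v}}(b)$) along which $\|U_\Phi^{(m,n)}f-f\|$ is small, and there $\langle U_\Phi^{(m,n)}f,f\rangle$ stays within a small multiple of $\|f\|^2>0$. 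Taking $C=B$, this shows the Ces\`aro average in (b) fails to vanish for every $b$, which is exactly the negation of (b).

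For (a)$\Rightarrow$(c), the substantive direction, I would run a directional Wiener-type argument. First reduce the $\ell^1$ average in (c) to the $\ell^2$ average of $|\langle U_\Phi^{(m,n)}f,g\rangle|^2$, using that for a bounded nonnegative sequence the two Ces\`aro averages vanish together (both being equivalent to convergence to $0$ along a density-one subset of the strip). Then pass to the spectral side: with $\sigma_{f,g}$ the cross spectral measure of the Koopman representation of $\mathbb{Z}^2$ on $\mathbb{T}^2$ one has $\langle U_\Phi^{(m,n)}f,g\rangle=\widehat{\sigma_{f,g}}(m,n)$, and averaging $|\widehat{\sigma_{f,g}}(m,n)|^2$ against the counting measure of $\Lambda_k^{\vec{v}}(b)$ produces a Fej\'er-type kernel $K_k(\theta-\phi)$ on $\mathbb{T}^2$ that, as $k\to\infty$, concentrates on the directionally resonant set determined by $\vec{v}=(1,\beta)$. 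The limiting average is then carried exactly by the directional atoms of $\sigma_{f,g}$, and these atoms are precisely the spectral data recorded by the $\vec{v}$-directional Kronecker algebra through \eqref{1}; hypothesis (a), i.e. $\mathcal{K}_\mu^{\vec{v}}=\{\emptyset,X\}$, forces the mean-zero spectral measures to carry no such atoms, so the limit is $0$ and (c) holds for every $b$. The passage from ``some $b$'' in (b) to ``every $b$'' in (c) is supplied by the $b$-independence of $\mathcal{K}_\mu^{\vec{v}}(b)$ noted after \eqref{1} (see \cite[Proposition 3.1]{LX}).

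The hard part will be the (a)$\Rightarrow$(c) step, specifically the anisotropy of $\Lambda_k^{\vec{v}}(b)$: it is long in the direction $\vec{v}$ but only $O(b)$ thick transversally, and the number of lattice points in each transverse slice fluctuates with the fractional part of $\beta m$ when $\beta$ is irrational, so the limiting kernel cannot be read off from a naive product of one-dimensional Fej\'er kernels. I would therefore prove the directional Wiener lemma by hand, splitting each $(m,n)\in\Lambda_k^{\vec{v}}(b)$ into a longitudinal part $m\in[0,k-1]\to\infty$ and a bounded transverse part, bounding the transverse sum uniformly in $m$, and showing that the longitudinal Ces\`aro average annihilates every nonresonant frequency while isolating the resonant ones; the delicate point is to pin down the resonant set intrinsically on $\mathbb{T}^2$ despite the lift ambiguity in $\theta_1+\beta\theta_2$, and to check that this atomic part coincides with the compact vectors defining $\mathcal{K}_\mu^{\vec{v}}$. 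Confirming that the counting measures on $\Lambda_k^{\vec{v}}(b)$ behave like a F{\o}lner sequence, so the mean-ergodic limit genuinely exists, and accommodating the possibly noninvertible $\mathbb{Z}_+$-coordinate of $\Phi$ within the $\mathbb{Z}^2$ spectral framework, are the remaining technical points.
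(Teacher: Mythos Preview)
The paper does not actually prove this lemma: it is imported from \cite{L} (the sentence introducing it reads ``To prove this result let us begin the following lemma \cite{L}''), so there is no in-paper argument to compare your proposal against. Your cycle (c)$\Rightarrow$(b)$\Rightarrow$(a)$\Rightarrow$(c), with a contrapositive compact-orbit pigeonhole for (b)$\Rightarrow$(a) and a directional Wiener/spectral-measure computation for (a)$\Rightarrow$(c), is the standard Koopman--von Neumann route and is almost certainly what the proof in \cite{L} does, since that preprint builds directional weak mixing for $\mathbb{Z}^q$-actions on precisely this spectral scaffolding.

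The caveats you raise at the end are the real content: the strips $\Lambda_k^{\vec{v}}(b)$ are not products, the transverse slice cardinalities oscillate with $\{\beta m\}$ when $\beta\notin\mathbb{Q}$, and the $\mathbb{Z}_+$-coordinate of $\Phi$ makes $U_\Phi^{(m,n)}$ only an isometry, so the $\mathbb{T}^2$ spectral picture needs either a minimal unitary dilation or a direct isometric-semigroup argument. None of these kills the strategy, but the proposal is an outline rather than a proof until the directional Wiener lemma is written out and the identification of its atomic part with $\mathcal{K}_\mu^{\vec{v}}$ is made precise; that identification is exactly what \cite[Proposition~3.1]{LX} and the results of \cite{L} supply, so in effect you would be reproving the cited source.
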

Considering lemma \ref{lemma1}, we have the following result.
\begin{theorem}\label{thm1}
Let $\mu$ be the uniform Bernoulli measure on
$\mathbb{Z}_a^\mathbb{Z}$,
$$
f\left(x_{0},\cdots,x_{n+r}\right)=\sum\limits_{i=0}^{r}\lambda_i x_{n+i}(\bmod\ a)
$$
and $\Phi$ be a $\mathbb{Z}_+\times \mathbb{Z}$-action given in
\eqref{action}. Then the system
$(\mathbb{Z}_a^\mathbb{Z},\mathcal{X},\mu,\Phi)$ is directional
weak mixing along any direction $\vec{v}\in S^1.$
\end{theorem}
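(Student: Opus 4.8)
The plan is to verify condition (b) of Lemma \ref{lemma1} for a single conveniently large $b>0$. First I would record the standard reduction: setting $f=1_B-\mu(B)$ and $g=1_C-\mu(C)$, the $\Phi$-invariance of $\mu$ together with $U_\Phi^{(m,n)}\mathbf{1}=\mathbf{1}$ gives $\langle U_\Phi^{(m,n)}1_B,1_C\rangle-\mu(B)\mu(C)=\langle U_\Phi^{(m,n)}f,g\rangle$, where $f,g$ are mean-zero. Thus it suffices to show that for all mean-zero $f,g\in L^2$,
\[
A_k(f,g):=\frac{1}{\#\Lambda_k^{\vec v}(b)}\sum_{(m,n)\in\Lambda_k^{\vec v}(b)}\bigl|\langle U_\Phi^{(m,n)}f,g\rangle\bigr|\longrightarrow 0.
\]

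The main idea is to exploit the algebraic structure of $\mu$. Since $\mu$ is the uniform Bernoulli measure, it is the Haar measure of the compact abelian group $\mathbb{Z}_a^{\mathbb{Z}}$, and the characters $\chi_s(x)=\omega^{\sum_k s_k x_k}$ (with $\omega=e^{2\pi i/a}$ and $s$ finitely supported) form an orthonormal basis of $L^2$. Each $\Phi^{(m,n)}$ is a group endomorphism, so $U_\Phi^{(m,n)}\chi_s=\chi_{s'}$ is again a character; encoding $s$ by the Laurent polynomial $S(t)=\sum_k s_k t^k\in\mathbb{Z}_a[t,t^{-1}]$, a direct computation shows the dual action is multiplication by $t^nF(t)^m$, i.e. $S'(t)=t^nF(t)^mS(t)$ with $F(t)=\sum_{i=0}^r\lambda_i t^i$. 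Consequently $\langle U_\Phi^{(m,n)}\chi_s,\chi_{s''}\rangle\in\{0,1\}$, and it equals $1$ exactly when $t^nF(t)^mS(t)=S''(t)$.

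The crux is then purely combinatorial: for a fixed nonzero $s$ and any $s''$, the equation $t^nF(t)^mS(t)=S''(t)$ has at most one solution $(m,n)\in\mathbb{Z}_+\times\mathbb{Z}$. Here I would use the nondegeneracy of the endpoints, $\gcd(\lambda_0,a)=\gcd(\lambda_r,a)=1$ (the one-sided analogue of the hypotheses of Theorem \ref{Thm1}): since $\lambda_0,\lambda_r$ are units, $F(t)^m$ has its lowest term $\lambda_0^m$ at degree $0$ and its top term $\lambda_r^m$ at degree $rm$, both nonzero, so the support length of $F(t)^mS(t)$ equals $rm+\mathrm{span}(S)$. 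Matching this to the fixed length $\mathrm{span}(S'')$ forces a single value of $m$, after which the lowest-degree term fixes $n$. I expect this to be the main obstacle, because $\mathbb{Z}_a$ is not a field: without the unit hypothesis the leading and trailing coefficients can collapse under powering (for instance over $\mathbb{Z}_4$ one has $(2+t)^2=t^2$, so $T_f^2=\sigma^2$ and weak mixing fails), so the entire argument hinges on ruling out such degeneracies.

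Finally, from the crux it follows that $\sum_{(m,n)\in\Lambda_k^{\vec v}(b)}|\langle U_\Phi^{(m,n)}\chi_s,\chi_{s''}\rangle|\le 1$, whence $A_k(\chi_s,\chi_{s''})\le 1/\#\Lambda_k^{\vec v}(b)\to 0$, since $\#\Lambda_k^{\vec v}(b)\to\infty$ (for $b$ chosen large enough, $\#\Lambda_k^{\vec v}(b)\ge k$); note this holds for every direction $\vec v$, the direction entering only through the growth of $\#\Lambda_k^{\vec v}(b)$, and the case $\vec v=(0,1)$ being covered identically since there $m$ stays bounded and $n$ determines the unique hit. To pass from characters to arbitrary mean-zero $f,g$, I would use the uniform bound $A_k(f,g)\le\|f\|_2\|g\|_2$ from Cauchy--Schwarz together with the bilinear estimate $|A_k(f,g)-A_k(f_0,g_0)|\le\|f-f_0\|_2\|g\|_2+\|f_0\|_2\|g-g_0\|_2$, approximating $f,g$ by finite combinations of nonconstant characters and invoking the triangle inequality over the finitely many character pairs. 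This yields $A_k(f,g)\to 0$ for all mean-zero $f,g$, establishing condition (b) and hence $\vec v$-weak mixing for every $\vec v\in S^1$.
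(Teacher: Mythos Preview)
Your argument is correct and takes a genuinely different route from the paper. The paper's proof is a one-line disjoint-support observation: since the local rule looks only to the right, $\Phi^{-(m,n)}\xi(-M,M)$ is measurable with respect to the coordinates in $[n-M,\,n+M+rm]$, so once $n>M+N$ it is exactly independent of $\xi(-N,N)$ under the product measure; the paper then asserts that condition~(b) of Lemma~\ref{lemma1} follows for every $\vec v$. You instead diagonalise $U_\Phi$ via the character basis and reduce to counting solutions of $t^{n}F(t)^{m}S(t)=S''(t)$.

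What each approach buys is worth recording. The paper's independence argument needs no hypothesis on the $\lambda_i$, but it only controls terms with $n$ large, so it really yields $\vec v$-weak mixing just for $\beta>0$ (where $n\to+\infty$ along $\Lambda^{\vec v}(b)$) and, via the right endpoint $n+rm$, for $\beta<-r$; in the range $-r\le\beta\le 0$ the cylinder supports never separate and the paper's sketch is incomplete. Your character count handles every direction uniformly, at the price of the unit hypotheses $\gcd(\lambda_0,a)=\gcd(\lambda_r,a)=1$. Crucially, you have shown these hypotheses cannot simply be dropped: with $F(t)=2+t$ over $\mathbb{Z}_4$ one has $F(t)^2=t^2$, hence $\Phi^{(2,-2)}=\mathrm{id}$ and $(1,-1)$-weak mixing fails, so the theorem as stated is false in that generality. (The mirror example $F(t)=1+2t$ over $\mathbb{Z}_4$, with $F(t)^2=1$, shows the condition on $\lambda_r$ is equally necessary.) One small point: your span argument ``$rm+\mathrm{span}(S)=\mathrm{span}(S'')$ determines $m$'' requires $r\ge 1$, which is implicit in the paper's use of ``radius'' but should be stated.
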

\begin{proof}
Note that  for any $M,N\in\mathbb{N}$, there exists $n>N+M$ such that for any $m\in\mathbb{Z}_+$, one has
\[
\mu(\Phi^{-(m,n)}\xi(-M,M)\cap\xi(-N,N))=\mu(\Phi^{-(m,n)}\xi(-M,M))\mu(\xi(-N,N)).
\]
We can easily show that the system associated with \eqref{action}
satisfies (b) of Lemma \ref{lemma1} along any $\vec{v}\in S^1$.
Hence we immediately obtain that the above system is directional
weak mixing along any direction $\vec{v}\in S^1.$
\end{proof}
With the help of \cite[Theorem 1.3]{L}, we immediately obtain that
another result about directional sequence entropy for the system
defined in Theorem \ref{thm1}.
\begin{corollary}
Let $(\mathbb{Z}_a^\mathbb{Z},\mathcal{X},\mu,\Phi)$ be the
$\mathbb{Z}^2$-m.p.s which is defined in Theorem \ref{thm1}. For
any direction vector $\vec{v}\in S^1$, there exists a sequence
$S=\{(m_i,n_i)\}_{i=1}^{\infty}$ of $\Lambda^{\vec{v}}(b)$
satisfying equality $h^{S}_{\mu}(\Phi,\alpha)=H_{\mu}(\alpha)$ for
any finite measurable partition $\alpha$ of
$\mathbb{Z}_a^\mathbb{Z}$.
\end{corollary}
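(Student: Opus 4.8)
The plan is to derive the statement directly from the directional weak mixing already established in Theorem \ref{thm1}, combined with the characterization of $\vec{v}$-weak mixing by maximal directional sequence entropy, namely \cite[Theorem 1.3]{L}. Theorem \ref{thm1} shows that $(\mathbb{Z}_a^\mathbb{Z},\mathcal{X},\mu,\Phi)$ is $\vec{v}$-weak mixing for every $\vec{v}\in S^1$, while \cite[Theorem 1.3]{L} asserts, for an arbitrary $\mathbb{Z}^2$-m.p.s.\ and a fixed direction, the equivalence between $\vec{v}$-weak mixing and the existence of a single sequence $S\subset\Lambda^{\vec{v}}(b)$ for which $h^S_\mu(\Phi,\alpha)=H_\mu(\alpha)$ holds for every finite measurable partition $\alpha$. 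Hence, once the hypotheses of Theorem \ref{thm1} are in force, the corollary follows by applying \cite[Theorem 1.3]{L} separately in each direction $\vec{v}\in S^1$; first I would check that the system of Theorem \ref{thm1} meets the standing assumptions of the cited theorem (that $\mu$ is $\Phi$-invariant and that $\Phi$ is the action of \eqref{action}), so that the equivalence is literally applicable.

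To keep the logic transparent I would first isolate the trivial half of the equality. For any sequence $S=\{(m_i,n_i)\}_{i=1}^\infty$ and any finite partition $\alpha$, subadditivity of entropy together with the $\Phi$-invariance of $\mu$ gives $H_\mu(\bigvee_{i=1}^k\Phi^{-(m_i,n_i)}\alpha)\le\sum_{i=1}^k H_\mu(\Phi^{-(m_i,n_i)}\alpha)=kH_\mu(\alpha)$, so that $h^S_\mu(\Phi,\alpha)\le H_\mu(\alpha)$ holds automatically. Therefore the whole content of the corollary is the reverse inequality, and this is exactly what $\vec{v}$-weak mixing supplies through \cite[Theorem 1.3]{L}: by Lemma \ref{lemma1}, weak mixing along $\vec{v}$ forces the deviations $|\langle U_\Phi^{(m,n)}1_B,1_C\rangle-\mu(B)\mu(C)|$ to be Ces\`aro-negligible over the truncated strips $\Lambda_k^{\vec{v}}(b)$, so that inside the cone $\Lambda^{\vec{v}}(b)$ there is an abundance of times on which the images $\Phi^{-(m,n)}\alpha$ become asymptotically independent of any prescribed finite family of sets.

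If the cited theorem were not available and I had to argue from scratch, I would reprove it for the present system by an extraction argument. Using Lemma \ref{lem1} and the generating sequence $\xi(-k,k)\nearrow\epsilon$, it suffices to produce a single $S\subset\Lambda^{\vec{v}}(b)$ that is maximizing on every generator $\xi(-k,k)$: an inductive choice of the pairs $(m_i,n_i)$ inside the strip, at each stage selecting the next index so that $\Phi^{-(m_i,n_i)}\xi(-k,k)$ is nearly independent of the join already constructed --- which the density-zero deviation of Lemma \ref{lemma1} permits --- yields $\tfrac1k H_\mu(\bigvee_{i=1}^k\Phi^{-(m_i,n_i)}\xi(-k,k))\to H_\mu(\xi(-k,k))$, and a diagonalization over $k$ produces one universal sequence. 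I expect the main obstacle to be exactly this diagonalization: arranging a single sequence that works simultaneously for all partitions while keeping every selected pair $(m_i,n_i)$ inside the fixed strip $\Lambda^{\vec{v}}(b)$ for an arbitrary slope $\beta$. This is where genuine directional weak mixing, rather than ordinary mixing, is indispensable, and it is why I would rely on \cite[Theorem 1.3]{L} instead of the explicit cylinder computations of Theorem \ref{Thm1}, whose nesting structure produces the opposite, collapsing regime and hence the value $2r\log a\cdot\limsup_{l\to\infty}m_l/l$ rather than $H_\mu(\alpha)$.
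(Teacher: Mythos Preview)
Your approach is correct and matches the paper's exactly: the corollary is stated in the paper immediately after the sentence ``With the help of \cite[Theorem 1.3]{L}, we immediately obtain\ldots'', so the intended proof is precisely to combine the $\vec{v}$-weak mixing from Theorem~\ref{thm1} with the equivalence in \cite[Theorem 1.3]{L}. Your additional remarks on the trivial inequality $h^S_\mu(\Phi,\alpha)\le H_\mu(\alpha)$ and the from-scratch extraction argument are extra detail beyond what the paper supplies, but they do not diverge from the paper's route.
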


\section{A result in number theory}
In this section, we will obtain a result about combinatorial
mathematics. To complete the proof, we need to recall some
results. In \cite{L}, the second author obtained a mean ergodic
theory for directional weak mixing, which is the key to prove our
result, and we restated it  as follows. We remark that the
following results, that is, Theorem \ref{thm22} and Theorem
\ref{thm3} hold for any $\mathbb{Z}^d$-m.p.s.
\begin{theorem}\label{thm22}
Let  $(\mathbb{Z}_a^\mathbb{Z},\mathcal{X},\mu,\Phi)$ be a
$\mathbb{Z}^2$-m.p.s., $\vec{v}=(1,\beta)\in\mathbb{R}^2$ be a
directional vector and $b\in(0,\infty)$. Then the following
statemens are equivalent.
\begin{itemize}
\item[(a)] $(\mathbb{Z}_a^\mathbb{Z},\mathcal{X},\mu,\Phi)$ is
$\vec{v}$-weak mixing.

\item[(b)] For any infinite subset $Q=\{(m_i,n_i)\}_{i=1}^\infty$
of $\Lambda^{\vec{v}}(b)$  with $\liminf_{n \rightarrow
\infty}\frac{\#{(Q\cap\Lambda_k^{\vec{v}}(b))}}{\#(\Lambda_k^{\vec{v}}(b))}>0$,
one has
\begin{equation*}
\lim_{N\to
\infty}\|\frac{1}{N}\sum_{i=1}^{N}U_\Phi^{(m_i,n_i)}g-\int_{\mathbb{Z}_a^\mathbb{Z}}gd\mu\|_2=0
\end{equation*}
for all $g\in L^2(\mathbb{Z}_a^\mathbb{Z},\mathcal{X},\mu)$, where
$\#(A)$ is the number of elements in finite subset $A$.
\end{itemize}
\end{theorem}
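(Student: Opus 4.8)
The plan is to establish the equivalence (a) $\iff$ (b) by first reducing to functions of zero integral and then treating the two implications separately. Writing $g = \int_{\mathbb{Z}_a^\mathbb{Z}} g\,d\mu + g_0$ with $\int_{\mathbb{Z}_a^\mathbb{Z}} g_0\,d\mu = 0$, the constant part is fixed by every $U_\Phi^{(m,n)}$, so that $\frac{1}{N}\sum_{i=1}^N U_\Phi^{(m_i,n_i)}g - \int_{\mathbb{Z}_a^\mathbb{Z}} g\,d\mu = \frac{1}{N}\sum_{i=1}^N U_\Phi^{(m_i,n_i)}g_0$. Thus it suffices to prove that the averages of a mean-zero $g_0$ converge to $0$ in $L^2$ along every admissible $Q$ exactly when the system is $\vec{v}$-weak mixing.

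For (b) $\Rightarrow$ (a) I would argue by contraposition. If $(\mathbb{Z}_a^\mathbb{Z},\mathcal{X},\mu,\Phi)$ is not $\vec{v}$-weak mixing, then $\mathcal{K}_\mu^{\vec{v}}$ is non-trivial, so $\mathcal{A}_c^{\vec{v}}(b)$ contains a non-constant compact vector, and since this factor is $U_{\Phi^{\vec{w}}}$-invariant for all $\vec{w}\in\mathbb{Z}^2$ it carries discrete spectrum and supplies a normalized eigenfunction $\phi$ with $U_\Phi^{(m,n)}\phi = \chi(m,n)\phi$, $\chi(m,n)=e^{2\pi i(\alpha m+\gamma n)}$, $\|\phi\|_2=1$ and $\int_{\mathbb{Z}_a^\mathbb{Z}}\phi\,d\mu=0$. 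Fixing $\epsilon_0\in(0,1)$, the set $Q=\{(m,n)\in\Lambda^{\vec{v}}(b):|\chi(m,n)-1|<\epsilon_0\}$ has positive lower density in the cone by Weyl equidistribution of $(\alpha m+\gamma n)$ over the lattice points of $\Lambda^{\vec{v}}(b)$ (and trivially so in the rational case). Along this $Q$ one computes $\|\frac{1}{N}\sum_{i=1}^N U_\Phi^{(m_i,n_i)}\phi-\phi\|_2 = |\frac{1}{N}\sum_{i=1}^N(\chi(m_i,n_i)-1)|\leq\epsilon_0$, so the averages remain at distance $\geq 1-\epsilon_0>0$ from $0=\int_{\mathbb{Z}_a^\mathbb{Z}}\phi\,d\mu$, contradicting (b).

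For (a) $\Rightarrow$ (b) the engine is the directional Koopman--von Neumann dichotomy furnished by Lemma \ref{lemma1}: $\vec{v}$-weak mixing gives, for mean-zero $g_0$, that $\frac{1}{\#\Lambda_k^{\vec{v}}(2b)}\sum_{(m,n)\in\Lambda_k^{\vec{v}}(2b)}|\langle U_\Phi^{(m,n)}g_0,g_0\rangle|\to 0$ after approximating $g_0$ by simple functions, and the combinatorial form of that lemma then yields a full-density subset $J\subset\Lambda^{\vec{v}}(2b)$ along which $\langle U_\Phi^{(m,n)}g_0,g_0\rangle\to 0$. Since the theorem is stated for a $\mathbb{Z}^2$-m.p.s., each $U_\Phi^{(m,n)}$ is unitary with $(U_\Phi^{(m,n)})^{*}=U_\Phi^{-(m,n)}$, so expanding the square gives
\[
\left\|\frac{1}{N}\sum_{i=1}^N U_\Phi^{(m_i,n_i)}g_0\right\|_2^2 = \frac{1}{N^2}\sum_{i,j=1}^N \langle U_\Phi^{(m_i,n_i)-(m_j,n_j)}g_0,g_0\rangle,
\]
where each difference $(m_i,n_i)-(m_j,n_j)$ lies in $\Lambda^{\vec{v}}(2b)$ because $|\beta m_i-n_i|,|\beta m_j-n_j|\leq b/2$; as $\mathcal{K}_\mu^{\vec{v}}$ is independent of the width, $J$ may indeed be taken full-density in $\Lambda^{\vec{v}}(2b)$, and the correlation at such a difference is small whenever the difference avoids $J^c$ and has large modulus.

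The decisive step, and the main obstacle, is the counting estimate showing that the pairs $(i,j)$ whose difference is ``bad'' contribute $o(N^2)$. I would split the pairs into three classes: those with $\|(m_i,n_i)-(m_j,n_j)\|\leq K$ for a fixed constant $K$ (at most $O(KN)$ pairs, negligible after dividing by $N^2$); those whose difference lies in $J$ with modulus $>K$ (each correlation $<\epsilon$, contributing at most $\epsilon$); and those whose difference lies in the density-zero set $J^c$. For the last class one uses the positive lower density of $Q$ together with the density-zero property of $J^c$: for each $i$ the admissible indices $j$ force $(m_j,n_j)$ into the translate $(m_i,n_i)-J^c$, a density-zero subset of $\Lambda^{\vec{v}}(2b)$, so the number of such $j$ is $o(N)$ uniformly, giving $o(N^2)$ bad pairs overall; letting $N\to\infty$ and then $\epsilon\to 0$ yields the $L^2$-convergence. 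The genuinely delicate point is making this cone geometry rigorous, in particular controlling how a density-zero set behaves under lattice differencing inside $\Lambda^{\vec{v}}(2b)$, and this is exactly where the hypothesis $\liminf_k \#(Q\cap\Lambda_k^{\vec{v}}(b))/\#\Lambda_k^{\vec{v}}(b)>0$ enters in an essential way.
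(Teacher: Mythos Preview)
The paper does not actually prove Theorem \ref{thm22}: it is restated from \cite{L} with the explicit comment ``we restated it as follows,'' and no argument is given. There is therefore no in-paper proof to compare your attempt against.

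Assessing your sketch on its own merits, the overall architecture is sound (reduce to mean-zero functions; contrapose for (b)$\Rightarrow$(a); expand the square and invoke a Koopman--von Neumann density-one set for (a)$\Rightarrow$(b)), but each direction has a genuine gap. For (b)$\Rightarrow$(a) you assert that a non-trivial $\mathcal{K}_\mu^{\vec{v}}$ produces an eigenfunction of the \emph{full} $\mathbb{Z}^2$-action with character $e^{2\pi i(\alpha m+\gamma n)}$. This does not follow: membership in $\mathcal{A}_c^{\vec{v}}(b)$ means only that the orbit under $\{U_\Phi^{(m,n)}:(m,n)\in\Lambda^{\vec{v}}(b)\}$ has compact closure, and invariance of that subspace under all $U_{\Phi^{\vec{w}}}$ does not force discrete spectrum for the whole action (think of $\vec{v}=(1,0)$ with small $b$, where directional compactness is just compactness for the first generator). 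The repair is to avoid eigenfunctions altogether: take any non-constant mean-zero $f\in\mathcal{A}_c^{\vec{v}}(b)$, cover its compact orbit closure by finitely many $\epsilon$-balls, and use pigeonhole to find one ball---necessarily centered near a vector of norm $\|f\|>0$---visited by a positive-lower-density set $Q\subset\Lambda^{\vec{v}}(b)$; averages along $Q$ then stay $\epsilon$-close to that center and hence away from $0$.

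For (a)$\Rightarrow$(b) the expansion of the square and the observation that differences land in $\Lambda^{\vec{v}}(2b)$ are correct, but your final counting step asserts that for each fixed $i$ the number of $j$ with $(m_i,n_i)-(m_j,n_j)\in J^c$ is $o(N)$ \emph{uniformly in $i$}. Density zero of $J^c$ in the strip gives no such uniform bound; it is only an averaged statement. A cleaner route is to bypass the set $J$ entirely: use the positive-lower-density hypothesis on $Q$ to compare $N$ with $\#\Lambda_k^{\vec{v}}(b)$, bound the double sum of correlations by a constant times the full Ces\`aro average $\frac{1}{\#\Lambda_k^{\vec{v}}(2b)}\sum_{(m,n)\in\Lambda_k^{\vec{v}}(2b)}|\langle U_\Phi^{(m,n)}g_0,g_0\rangle|$, and then apply Lemma~\ref{lemma1}(c) (with width $2b$) directly.
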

%With help of Theorem \ref{thm22}, we obtain a simple result about number theory.
%\begin{theorem}\label{thm2}
%For any $\beta\in\mathbb{R}$, for $m$-a.e. $x\in(0,1)$, the
%probability that the number $0$ appears in the sequence
%$\{\sum_{k=0}^n\binom{n}{k}x_{k+1} (\text{mod } 2)\}_{n=1}^\infty$
%is $1/2$, where $0.x_1,x_2,\ldots$ is the binary expansion of $x$
%and $m$ is the Lebesgue measure on $[0,1]$.
%\end{theorem}
%\begin{proof}
%Let the local rule $f(x_{-1},x_0,x_{1})=x_0+x_1$  ($\bmod 2$). We
%consider the semigroup $\mathbb{Z}_+\times \mathbb{Z}$-action
%$\Phi$ defined by $T^{m}_{f[-1,1]}\circ \sigma^{n}=\Phi^{(m,n)}$,
%$m\in \mathbb{Z}_+$ and $n\in \mathbb{Z}$, where $\sigma$ is the
%shift map from $\mathbb{Z}_2^{\mathbb{Z}}$ to
%$\mathbb{Z}_2^{\mathbb{Z}}$.
%
%Let $Y$ be all real numbers with unique binary expansion. Then
%$m(Y)=1$. For any $x\in Y$ with binary expansion
%$x=0.x_0x_1,\ldots$, we identify it as
%$(x_i)_{i=0}^\infty\in\{0,1\}^{\mathbb{Z}_+}$. Let $A={}_0[0]_0$
%and $h(x)=1_A(x)$. Then
%$$ h(\Phi^{(n,n)}x)=\left\{
%\begin{array}{lr}
%1, & \sum_{k=0}^n\binom{n}{k}x_{k+1} (\text{mod } 2)=1,\\
%0, & \sum_{k=0}^n\binom{n}{k}x_{k+1} (\text{mod } 2)=0.\\
%\end{array}
%\right.
%$$
%For $\vec{v}=(1,1)$, one has, by Theorem \ref{thm1} and Theorem \ref{thm22},
%\[\lim_{N\to \infty}\frac{1}{N}\sum_{n=0}^{N-1}h(\Phi^{(n,n)}x)=\int h(x)dm(x)=1/2.\]
%This finishes the proof of Theorem \ref{thm2}
%\end{proof}
Moreover, for direction
$\vec{v}=(m,n)\in\mathbb{Z}^2\setminus\{(0,0)\}$, one has the
following version Birkhoff ergodic theory. We remark that the
proof follows methods in \cite{A}.
\begin{theorem}\label{thm3}
Let  $(\mathbb{Z}_a^\mathbb{Z},\mathcal{X},\mu,\Phi)$ be a
$\mathbb{Z}^2$-m.p.s. and $\vec{v}=(m,n)\in\mathbb{Z}^2$ be a
direction vector. If
$(\mathbb{Z}_a^\mathbb{Z},\mathcal{X},\mu,\Phi)$ is $\vec{v}$-weak
mixing then for $\mu$-a.e. $x\in \mathbb{Z}_a^\mathbb{Z}$
\begin{equation*}
\lim_{N\to
    \infty}\frac{1}{N}\sum_{k=0}^{N-1}U_\Phi^{(km,kn)}g(x)=\int_{\mathbb{Z}_a^\mathbb{Z}}gd\mu
\end{equation*}
for all $g\in L^1(\mathbb{Z}_a^\mathbb{Z},\mathcal{X},\mu)$.
\end{theorem}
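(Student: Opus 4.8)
The plan is to recognize the stated Cesàro averages as the ordinary Birkhoff averages of a single measure-preserving transformation, and then to upgrade $\vec{v}$-weak mixing to ergodicity of that transformation. Set $T:=\Phi^{(m,n)}$, which preserves $\mu$ (the shift $\sigma$ preserves the uniform Bernoulli measure, and $T_{f[-r,r]}$ does as well under the standing hypotheses). Since $U_\Phi^{(km,kn)}g(x)=g(\Phi^{(km,kn)}x)=g(T^kx)$, the quantity to analyze is exactly $\frac{1}{N}\sum_{k=0}^{N-1}g(T^kx)$. By the classical Birkhoff pointwise ergodic theorem for the (not necessarily invertible) map $T$, this average converges for $\mu$-a.e. $x$ and in $L^1$ to the conditional expectation $\mathbb{E}_\mu[g\mid\mathcal{I}_T]$, where $\mathcal{I}_T$ is the $\sigma$-algebra of $T$-invariant sets. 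It therefore remains only to identify this limit with $\int_{\mathbb{Z}_a^\mathbb{Z}} g\,d\mu$, which will follow once we show that $T$ is ergodic.

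To establish ergodicity, I would feed a suitable orbit set into Theorem \ref{thm22}. Writing $\beta=n/m$, the points $Q=\{(jm,jn):j\in\mathbb{Z}_+\}$ lie exactly on the central axis $\{(m',n'):n'=\beta m'\}$ of the strip $\Lambda^{\vec{v}}(b)$, so $Q\subset\Lambda^{\vec{v}}(b)$ for every $b>0$. A direct count gives $\#(\Lambda_k^{\vec{v}}(b))\asymp bk$ while $\#(Q\cap\Lambda_k^{\vec{v}}(b))\asymp k/m$, so $Q$ has positive lower density in the strip. Applying the equivalence (a)$\Leftrightarrow$(b) of Theorem \ref{thm22} to this $Q$ yields $\lim_{N\to\infty}\|\frac{1}{N}\sum_{i=1}^{N}U_\Phi^{(im,in)}g-\int_{\mathbb{Z}_a^\mathbb{Z}} g\,d\mu\|_2=0$ for every $g\in L^2(\mathbb{Z}_a^\mathbb{Z},\mathcal{X},\mu)$. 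The left-hand sum is precisely the von Neumann mean ergodic average $\frac{1}{N}\sum_{i=1}^{N}U_T^{i}g$, whose $L^2$-limit, by the mean ergodic theorem, is the orthogonal projection of $g$ onto the space of $T$-invariant functions. Since this projection equals the constant $\int_{\mathbb{Z}_a^\mathbb{Z}} g\,d\mu$ for every $g$, the only $T$-invariant $L^2$ functions are constants, i.e.\ $T$ is ergodic.

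With $T$ ergodic, $\mathcal{I}_T$ is $\mu$-trivial and $\mathbb{E}_\mu[g\mid\mathcal{I}_T]=\int_{\mathbb{Z}_a^\mathbb{Z}} g\,d\mu$ for all $g\in L^1$, which is the claimed identity. The main obstacle is this middle step, passing from the directional hypothesis to genuine ergodicity of the single map $\Phi^{(m,n)}$, and the only real work inside it is verifying the positive-lower-density condition that licenses Theorem \ref{thm22}; the degenerate direction $\vec{v}=(0,1)$ reduces to the shift $\sigma^n$ and is handled separately as noted in Section 2. Everything else is a direct appeal to the classical pointwise and mean ergodic theorems, in line with the methods of \cite{A}.
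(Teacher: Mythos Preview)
Your argument is correct and follows essentially the same route as the paper: both extract from Theorem~\ref{thm22} that the single transformation $T=\Phi^{(m,n)}$ is ergodic (the paper phrases this as ``$\mu(A(\phi))=0$ or $1$'' for $T$-invariant sets, you phrase it via von Neumann's theorem on the orbit set $Q=\{(jm,jn)\}$), and then invoke the pointwise ergodic theorem. The only cosmetic difference is that the paper, following \cite{A}, reproduces the Garsia-style maximal-inequality proof of Birkhoff's theorem in situ, whereas you cite it as a black box; your version is the more economical of the two.
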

\begin{proof}

Let $\mathbb{A}_{N} g(x)=\frac{1}{N} \sum_{k=0}^{N-1}
g\left(\Phi^{(km,kn)}x\right)$. Given $\phi \in
L^{1}(\mathbb{Z}_a^\mathbb{Z},\mathcal{X},\mu)$, let
$$
M_{N} \phi=\max \left\{\sum_{j=0}^{k-1} \phi \circ \Phi^{(jm,jn)}: 1 \leq k \leq N\right\}
$$
It is easy to see that  $\mathbb{A}_{N} \phi \leq \frac{1}{N} M_{N} \phi$. Let
$$
A(\phi)=\left\{x \in \mathbb{Z}_a^\mathbb{Z}: \sup _{N} M_{N} \phi(x)=\infty\right\}.
$$
Then is $A(\phi)$ is a $\Phi^{(m,n)}$-invariant set. By Theorem
\ref{thm22}, one has $\mu(A(\phi))=0$ or $1$. Given $g\in
L^1(\mathbb{Z}_a^\mathbb{Z},\mathcal{X},\mu)$ and $\epsilon>0$,
let $\phi=g-\int gd\mu-\epsilon$. Suppose to the contrary that
$\mu(A(\phi))=1$. Then
$$
0<\int_{A(\phi)} \phi d \mu=\int_{A(\phi)}\left(g-\int
gd\mu-\epsilon\right) d \mu=-\varepsilon \mu(A) \leq 0
$$
a contradiction. Hence $\mu(A(\phi))=0$. Note for any $x \in
A(\phi)^{c}$, one has $ \limsup _{n \rightarrow \infty}
\mathbb{A}_{n} \phi(x) \leq 0 . $ So, one gets
$$
\limsup _{n \rightarrow \infty} \mathbb{A}_{n} \phi(x) \leq 0
$$
for $\mu$-a.e. $x\in \mathbb{Z}_a^\mathbb{Z}$. Since
$\mathrm{A}_{n} \phi=\mathrm{A}_{n} (g-\int gd\mu-\epsilon)$, one
has
$$
\limsup _{n \rightarrow \infty} \mathbb{A}_{n} g(x) \leq \int
gd\mu+\epsilon .
$$
Finally, do same thing for $-g$, one has
$$
\liminf _{n \rightarrow \infty} \mathbb{A}_{n} g(x) \geq \int
gd\mu-\epsilon.
$$
As $\epsilon>0$ is arbitrary, we obtain that  for $\mu$-a.e. $x\in \mathbb{Z}_a^\mathbb{Z}$
\begin{equation*}
\lim_{N\to
    \infty}\frac{1}{N}\sum_{k=0}^{N-1}U_\Phi^{(km,kn)}g(x)=\int_{\mathbb{Z}_a^\mathbb{Z}}gd\mu
\end{equation*}
for all $g\in L^1(\mathbb{Z}_a^\mathbb{Z},\mathcal{X},\mu)$.
\end{proof}

With help of Theorem \ref{thm3}, we obtain an interesting result about
number theory.
\begin{theorem}\label{thm2}
Given $N\in\mathbb{Z}$, $k\in\mathbb{N}$ and
$j\in\{0,1,\ldots,k-1\}$, for $m$-a.e. $x\in(0,1)$, the
probability that the number $j$ appears in the sequence
$\{\sum_{l=0}^{nN}\binom{nN}{l}x_{l+1} (\bmod k)\}_{n=1}^\infty$
is $1/k$, where $\binom{nN}{l}$ is the combinatorial number,
$0.x_1x_2\ldots$ is the $k$-adic development of $x$ and $\nu$ is
the Lebesgue measure on $[0,1]$.
\end{theorem}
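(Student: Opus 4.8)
The plan is to realize the sequence $\{\sum_{l=0}^{nN}\binom{nN}{l}x_{l+1} \pmod{k}\}_{n}$ as the orbit of a single observable under the $\mathbb{Z}_+\times\mathbb{Z}$-action generated by the simplest nontrivial linear CA, and then read off the asymptotic frequency of the value $j$ from the directional Birkhoff theorem (Theorem \ref{thm3}).

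First I would set $a=k$ and take the local rule $f(x_0,x_1)=x_0+x_1 \pmod{k}$, i.e. $r=1$, $\lambda_0=\lambda_1=1$, so that $T=T_{f[0,1]}$ acts by $(Tx)_i=x_i+x_{i+1}$. An induction on $m$ via Pascal's rule gives the identity $(T^m x)_i=\sum_{l=0}^m \binom{m}{l}x_{i+l} \pmod{k}$; in particular $(T^{nN}x)_1=\sum_{l=0}^{nN}\binom{nN}{l}x_{l+1} \pmod{k}$, which is exactly the $n$-th term of the sequence in the statement. Next I would identify $([0,1],\mathrm{Leb})$ with $(\mathbb{Z}_k^\mathbb{Z},\mu)$, where $\mu$ is the uniform Bernoulli measure, through the $k$-adic expansion $x\mapsto(x_1,x_2,\ldots)$, which is a measure isomorphism off a null set of $k$-adic rationals. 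Because $f$ is supported on nonnegative coordinates, $(T^{nN}x)_1$ depends only on $x_1,\ldots,x_{nN+1}$, so the one-sided expansion suffices and no negative coordinates enter. Setting $g=1_C$ with $C=\{y:y_1=j\}$, the event $\{S_n=j\}$ becomes $\{g(\Phi^{(nN,0)}x)=1\}$, since $\Phi^{(nN,0)}=T^{nN}\circ\sigma^0=T^{nN}$.

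With this reformulation, the ``probability that $j$ appears'' is precisely the density $\lim_{M\to\infty}\frac{1}{M}\sum_{n=0}^{M-1}1_{\{S_n=j\}}=\lim_{M\to\infty}\frac{1}{M}\sum_{n=0}^{M-1}g(\Phi^{(nN,0)}x)$. I would apply Theorem \ref{thm3} in the direction $\vec{v}=(N,0)$: since $f$ has the one-sided form required by Theorem \ref{thm1}, the system is $\vec{v}$-weak mixing along every direction of $S^1$, and weak mixing along $(N,0)$ coincides with weak mixing along its normalization $(1,0)$. Theorem \ref{thm3} then yields, for $\mu$-a.e. $x$, the limit $\lim_{M\to\infty}\frac{1}{M}\sum_{n=0}^{M-1}g(\Phi^{(nN,0)}x)=\int g\,d\mu=\mu(\{y_1=j\})=1/k$. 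Transporting back through the $k$-adic isomorphism gives density $1/k$ for Lebesgue-a.e. $x\in(0,1)$, as claimed; dropping the $n=0$ term and reindexing from $n=1$ does not affect the Ces\`aro limit.

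The main obstacle I anticipate is not the ergodic input, which is packaged in Theorem \ref{thm3}, but the bookkeeping that makes the transfer legitimate: verifying the binomial identity mod $k$ by induction, checking that the $k$-adic coding is a genuine measure isomorphism onto the Bernoulli system and that the forward-only support of $f$ lets the two-sided action act consistently on one-sided sequences, and confirming that the hypotheses of Theorem \ref{thm1} (hence $(N,0)$-weak mixing) are met by this specific rule. A secondary point to pin down is the range of $N$: for $\Phi^{(nN,0)}=T^{nN}$ to be defined one needs $nN\in\mathbb{Z}_+$, so the statement should be read with $N\in\mathbb{N}$, equivalently with $\binom{nN}{l}$ interpreted for $nN\ge 0$.
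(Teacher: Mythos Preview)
Your proposal is correct and follows essentially the same strategy as the paper: realize the sequence as the orbit of a cylinder indicator under the action generated by the rule $f(x_0,x_1)=x_0+x_1\pmod k$, invoke Theorem~\ref{thm1} for directional weak mixing, and conclude via the directional Birkhoff theorem (Theorem~\ref{thm3}). The only cosmetic difference is that the paper runs along the diagonal direction $\vec v=(1,N)$ (composing a shift with the CA and reading the $0$th coordinate), whereas you use the pure CA direction $(N,0)$ and read coordinate $1$; your choice is slightly cleaner and avoids some of the indexing bookkeeping.
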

\begin{proof}
For simplicity, we only prove the case of $k=2$,  because other
cases are similar to prove. Let the local rule $f$ be given as
$f(x_{-1},x_0,x_{1})=x_0+x_1$ ($\bmod\ 2$). We consider the
semigroup $\mathbb{Z}_+\times \mathbb{Z}_+$-action $\Phi$ defined
by $ \sigma^{s}\circ T^{t}_{f[-1,1]}=\Phi^{(s,t)}$, $s,t\in
\mathbb{Z}_+$, where $\sigma$ is the shift map from
$\{0,1\}^{\mathbb{Z}_+}$ to $\{0,1\}^{\mathbb{Z}_+}$. Suppose
$\mu$ is the uniform Bernoulli measure on the product space
$\{0,1\}^{\mathbb{Z}_+}$ and let $\mathcal{X}$ be the
$\sigma$-algebra generated by the cylinder sets. Then we obtain a
$\mathbb{Z}_+^2$-m.p.s.
$(\{0,1\}^{\mathbb{Z}_+},\mathcal{X},\mu,\Phi)$

Let $Y$ be all real numbers with unique $2$-adic development. Then
$\nu(Y)=1$. For any $x\in Y$ with $2$-adic development
$x=0.x_0x_1,\ldots$, we identify it as
$(x_i)_{i=0}^\infty\in\{0,1\}^{\mathbb{Z}_+}$. Let us consider
cylinder set $A={}_0[0]_0$ and $h(x)=1_A(x)$. Then, we have
$$
h(\Phi^{(n,nN)}x)=\left\{
\begin{array}{lr}
1, & \sum_{l=0}^{nN}\binom{nN}{l}x_{l+1} =1 (\bmod2),\\
0, & \sum_{l=0}^{nN}\binom{nN}{l}x_{l+1} =0 (\bmod2).\\
\end{array}
\right.
$$
By Theorem \ref{thm1}, we know that for $\vec{v}=(1,N)$,
$(\{0,1\}^{\mathbb{Z}_+},\mathcal{X},\mu,\Phi)$ is $\vec{v}$-weak
mixing. Therefore  one has, by Theorem \ref{thm3},
\begin{equation*}
\begin{split}
\lim_{p\to \infty}\frac{1}{p}\#\{n\in[0,p-1]:&\sum_{l=0}^{nN}\binom{nN}{l}x_{l+1} =0(\bmod2)\}
=\lim_{p\to
\infty}\frac{1}{p}\sum_{n=0}^{p-1}h(\Phi^{(n,nN)}x)\\=&\lim_{p\to
\infty}\frac{1}{p}\sum_{n=0}^{p-1}U_{\Phi}^{(n,nN)}h(x)=\int_{[0,1]}
h(x)d\nu(x)=1/2.
\end{split}
\end{equation*}
This finishes the proof of Theorem \ref{thm2}
\end{proof}
\begin{remark}
In particular, if $N=1$ and $k=2$, then
$\sum_{l=0}^{n}\binom{n}{l}x_{l+1}$ is the sum of the
combinatorial number, which may be applied in combinatorial
mathematics.
\end{remark}

$\mathbf{Acknowledgements}$. The first author (H. A) thanks ICTP
for providing financial support and all facilities. The first
author (H. A) was supported by the Simons Foundation and IIE. The
second author (C. Liu) was partially supported by NNSF of China
(12090012).

%%%%%%%%%%%%%%%%%%%%%%%%%%%%%%%%%%%%%%%%%%%%%%%%%%%%%%%%%%%%%%%%%%%%%%%%%%%%%%%%%%%%%%%%%%%%
%%%%%%%%%%%%%%%%%%%%%%%%%%%%%%%%%%%%%%%%%%%%%%%%%%%%%%%%%%%%%%%%%%%%%%%%%%%%%%%%%%%%%%%%%%%

\end{document}